\DeclareMathOperator{\link}{link}
\DeclareMathOperator{\mat}{mat}
\DeclareMathOperator{\Int}{Int}
\DeclareMathOperator{\Riv}{Riv}
\DeclareMathOperator{\PSL}{PSL}
\newcommand{\ZZ}{\ensuremath{\mathbb{Z}}}
\newcommand{\RR}{\ensuremath{\mathbb{R}}}
\newcommand{\uhp}{\ensuremath{\mathbb{H}}} 
\newcommand{\sm}[4]{\ensuremath{\left(\begin{smallmatrix} #1 & #2\\#3 & #4\end{smallmatrix}\right)}}
\newcommand{\lm}[4]{\ensuremath{\left(\begin{matrix} #1 & #2\\#3 & #4\end{matrix}\right)}}
\newcommand{\loren}[1]{\ensuremath{\mathscr{L}(#1)}}
\newcommand{\IntRS}{\ensuremath{\Int^{\text{RS}}}}
\newcommand{\genmtx}{\sm{a}{b}{c}{d}}
\newcommand{\tl}[1]{\tilde{\ell}_{#1}}
\newcommand{\loseq}{\ensuremath{\mathcal{S}}}
\newtheorem{theorem}{Theorem}[section]
\newtheorem{lemma}[theorem]{Lemma}
\newtheorem{proposition}[theorem]{Proposition}
\theoremstyle{definition}
\newtheorem{definition}[theorem]{Definition}
\newtheorem{remark}[theorem]{Remark}
\numberwithin{equation}{section}
\begin{document}

\title[Linking number]{Linking number of modular knots}
\author[J. Rickards]{James Rickards}
\address{University of Colorado Boulder, Boulder, Colorado, USA}
\email{james.rickards@colorado.edu}
\urladdr{https://math.colorado.edu/~jari2770/}
\date{\today}
\thanks{I thank Vishal P. Patil; a conversation about knot theory with him inspired me to have a look at this problem again. This work was partially supported by NSF-CAREER CNS-1652238 (PI Katherine E. Stange).}
\subjclass[2020]{Primary 37E15; Secondary 11E16, 11F23, 37C27, 57K10, 57K31}
\keywords{Modular knot, Lorenz knot, linking number, topograph, quadratic form}
\begin{abstract}
We compute the linking number of two modular knots in the space $\text{PSL}(2, \mathbb{Z})\backslash\text{PSL}(2, \mathbb{R})$ with the trefoil filled in, which answers a question posed by Ghys in 2007. This computation is realized through the correspondence between modular links and Lorenz links, and can be thought of as an intersection number involving Conway topographs. We compare this to a second formula for the linking number of Lorenz links, which was proven by Stephen F. Kennedy in 1994.
\end{abstract}

\maketitle

\setcounter{section}{-1}
\section{Author's note}
After posting to Arxiv, the works of Christopher-Lloyd Simon have been brought to my attention. In his thesis \cite{Simonthe} and subsequent paper \cite{Simon22}, he studies properties of modular knots. A formula equivalent to Theorem \ref{thm:mainthm} is derived, and the connection to periodic paths on a trivalent tree (i.e. rivers on a topograph) are established. These results are his starting point for many more results relating to modular knots. I will leave this preprint up as an alternate exposition, but precedence will go to his work, and anyone reading this paper should consult and cite his work instead.

\section{Introduction}
The manifold $X=\PSL(2, \ZZ)\backslash\PSL(2, \RR)$ is diffeomorphic to $S^3$ minus a trefoil (for a proof, see \cite{Milnor71}). The modular flow on $X$ is given by right multiplication by $\phi(t)=\sm{e^t}{0}{0}{e^{-t}}$, and periodic orbits of the modular flow are called modular knots. These knots are parametrized by conjugacy classes of primitive hyperbolic matrices, as described in the following definition.

\begin{definition}
	Let $A\in\PSL(2, \ZZ)$ be a primitive hyperbolic matrix with largest eigenvalue $\lambda>1$. Then there exists a matrix $M\in\PSL(2, \RR)$ with $M^{-1}AM=\sm{\lambda}{0}{0}{\lambda^{-1}}$. Define the knot $k_A$ to be:
	\begin{align*}
		k_A:[0,\log(\lambda)) & \rightarrow X\\
		t & \rightarrow M\phi(t).
	\end{align*}
\end{definition}

The path $k_A$ is a knot since $M\phi(\log(\lambda))=AM\sim M=M\phi(0)$ in $X$. Furthermore, it does not depend on the choice of $M$, and is constant across the $\PSL(2, \ZZ)$ conjugacy class of $A$.

As part of his 2006 ICM address, Ghys studied these knots (\cite{Ghys07} and \cite{GhysLeys}). He proved that the linking number of $k_A$ with the removed trefoil can be given by $\mathfrak{R}(A)$, the Rademacher function of $A$ (see the paper by Atiyah, \cite{At87}, for various equivalent definitions). He then asked: what is the linking number of $k_A$ and $k_B$? 

One difficulty of this question is the presence of the missing trefoil: to compute the linking number of our knots, you need to ``fill the trefoil in''. However, if you don't leave the space $\PSL(2, \ZZ)\backslash\PSL(2, \RR)$, there is no obvious way in which to do that. One partial fix is due to Duke, Imamo\={g}lu, and T\'{o}th in 2017 (\cite{DIT17}). In their paper, they used weight 2 cocycles to generalize the Rademacher function, and proved that this generalization gives the linking number of $k_A+k_{A^{-1}}$ with $k_B+k_{B^{-1}}$. The link $k_A+k_{A^{-1}}$ is null-homologous, which enabled them to do their computations without leaving $\PSL(2, \ZZ)\backslash\PSL(2, \RR)$. This generalized Rademacher function was later studied by Matsusaka in \cite{Matsusaka20}.

As part of their paper, they noted that $\link(k_A+k_{A^{-1}},k_B+k_{B^{-1}})$ could be interpreted as an intersection number of closed geodesics on the modular curve. This topic was studied further in \cite{JR21}, and then generalized to the case of geodesics on a Shimura curve in \cite{JR21shim}. Since these results all deal with the links $k_A+k_{A^{-1}}$, they do not provide a full solution to Ghys's original question. By harnessing the connection to Lorenz links, in Theorem \ref{thm:mainthm} we give a combinatorial computation of $\link(k_A, k_B)$, settling the question. In Section \ref{sec:DIT}, we compare this finer invariant to the symmetrized version of Duke et al., and give some data to illustrate the difference.

We should note that the linking number of Lorenz knots has been computed before. In \cite{Kennedy94}, Stephen F. Kennedy gives formulas for the linking numbers of Lorenz knots, as well as horseshoe knots. This formula involves the alphabetization of words, and the fact that the two formulas coincide does not appear to be trivial. For sake of comparison, we record his main result in Section \ref{sec:kennedy}.

\section{Main result}

This paper will take us into the world of dynamical systems and Lorenz equations, so we will adapt their terminology.

\begin{definition}
	A \textit{Lorenz word} is any finite aperiodic word in the letters $L, R$, with length at least 2. A \textit{single shift} of a Lorenz word $W$ is the word obtained by taking the first letter and moving it to the end, and is denoted $s(W)$. A \textit{cyclic shift} is the result of any number of single shifts. Two Lorenz words are said to be equivalent if they differ by a cyclic shift. A \textit{Lorenz sequence} is a doubly infinite periodic sequence, formed by repeating a fixed Lorenz word in both directions. Denote the Lorenz sequence associated to a Lorenz word $W$ by $\loseq(W)$.
\end{definition}

Given a Lorenz word $W$, we can substitute in the following matrices for $L$ and $R$ and multiply out to produce a primitive hyperbolic matrix:
\[L:=\lm{1}{1}{0}{1},\qquad R:=\lm{1}{0}{1}{1}.\]
Denote the matrix formed by $\mat(W)$. Given a primitive hyperbolic matrix $A$, define $\loren{A}$ to be the set of Lorenz words $W$ for which $A$ is conjugate to $\mat(W)$ over $\PSL(2, \ZZ)$. The following lemma is classical (and can be easily proven with the material in Section \ref{sec:topograph}).

\begin{lemma}\label{lem:loren}
	The set $\loren{A}$ is non-empty, and consists of a single equivalence class of Lorenz words.
\end{lemma}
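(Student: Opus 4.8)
The plan is to connect the statement to the theory of continued fractions and the action of $\PSL(2,\ZZ)$ on the hyperbolic plane. The key observation is that the matrices $L=\sm{1}{1}{0}{1}$ and $R=\sm{1}{0}{1}{1}$ generate the free monoid inside $\PSL(2,\ZZ)$ consisting of matrices with non-negative entries, and that a product $\mat(W)$ with $W$ an aperiodic word of length $\geq 2$ in $L,R$ (with both letters appearing — which aperiodicity forces) is primitive hyperbolic. Conversely, I would show that any primitive hyperbolic $A\in\PSL(2,\ZZ)$ is conjugate to such a product.

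First I would establish existence: given a primitive hyperbolic $A$, pass to its axis in $\uhp$, or equivalently to the purely periodic continued fraction expansion of its attracting fixed point. Up to conjugation we may assume $A$ has an attracting fixed point $\alpha>1$ and repelling fixed point in $(-1,0)$; then $\alpha$ has a purely periodic continued fraction expansion, and reading off the partial quotients gives a word in $L$ and $R$ whose product (in the appropriate convention) equals $A$ up to conjugacy. Because $A$ is hyperbolic the period is nontrivial, and because $A$ is primitive (not a proper power) the resulting word is aperiodic; since $A$ is hyperbolic rather than parabolic, both $L$ and $R$ must occur, so the word has length $\geq 2$ and is genuinely a Lorenz word. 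This shows $\loren{A}\neq\varnothing$.

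Next I would prove that $\loren{A}$ is a single cyclic-shift class. One direction is easy: $\mat(s(W))$ is obtained from $\mat(W)$ by conjugating by the first letter (moving $L$ or $R$ from the front to the back is exactly $X^{-1}\mat(W)X$ where $X$ is that letter), so all cyclic shifts of $W$ lie in the same $\PSL(2,\ZZ)$-conjugacy class, hence in $\loren{A}$. For the converse — that any two words in $\loren{A}$ differ by a cyclic shift — I would use uniqueness of the purely periodic continued fraction expansion, i.e. the fact that the combinatorial type of the axis of $A$ relative to the Farey tessellation (the "cutting sequence") is well-defined up to where one starts reading it, which is precisely a cyclic shift. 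Concretely, distinct cyclic-shift classes give rise to matrices with distinct (continued-fraction / cutting-sequence) invariants that are conjugation-invariant, so they cannot be $\PSL(2,\ZZ)$-conjugate. Section~\ref{sec:topograph} presumably sets up the topograph/river language in which this uniqueness becomes transparent: the river of the quadratic form attached to $A$ is a bi-infinite periodic path, its period is the Lorenz word up to cyclic shift, and $\PSL(2,\ZZ)$-conjugacy of forms corresponds exactly to agreement of rivers.

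The main obstacle I anticipate is the converse (uniqueness) direction: one must rule out "accidental" conjugacies between $\mat(W)$ and $\mat(W')$ for non-cyclically-equivalent $W,W'$, and the cleanest way is to exhibit a genuine conjugation-invariant — the cutting sequence of the axis, or equivalently the river of the associated form — and argue it determines the word up to cyclic shift. Care is also needed at the boundary of the argument: matching the continued-fraction convention to the $L,R$ convention, checking that aperiodicity of the word corresponds exactly to primitivity (no proper power) of the matrix, and confirming that hyperbolicity (as opposed to the parabolic case $L^n$ or $R^n$) is what guarantees both letters appear and hence that we get an honest Lorenz word of length $\geq 2$. Given the machinery promised in Section~\ref{sec:topograph}, each of these should reduce to a short combinatorial check.
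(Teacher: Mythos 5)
Your proposal is correct and follows essentially the same route the paper has in mind: the paper declares the lemma classical and points to the topograph machinery of Section \ref{sec:topograph}, where $\loren{A}=\Riv(q_A)$ and the river word is unique up to cyclic shift, which is exactly the continued-fraction/cutting-sequence invariant you invoke (cf.\ Remark \ref{rem:contfrac}). Your sketch is in fact more detailed than the paper's, and the points you flag (cyclic shift $=$ conjugation by the first letter, aperiodicity $\leftrightarrow$ primitivity, hyperbolicity forcing both letters) are the right ones to check.
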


The set $\loren{A}$ is clearly constant across an conjugacy class of primitive hyperbolic matrices. Since $A$ is hyperbolic, the equation $Ax=x$ has two real solutions, called the \textit{roots} of $A$ (where $A$ acts by M\"{o}bius transformation). As we will describe in Remark \ref{rem:contfrac}, $\loren{A}$ can be computed directly from the continued fraction representation of one of the roots of $A$.

\begin{theorem}\label{thm:mainthm}
	Let $A,B\in\PSL(2, \ZZ)$ be primitive hyperbolic matrices that are not conjugate to each other. Let $W_A\in\loren{A}$ and $W_B\in\loren{B}$, and write $W_A=a_1a_2\cdots a_m$ and $W_B=b_1b_2\cdots b_n$. Then $-\link(k_A, k_B)$ is equal to the number of triples of integers $(i, j, x)$ such that:
	\begin{itemize}
		\item $1\leq i\leq m$, $1\leq j\leq n$, and $x\geq 0$;
		\item $a_i=L$ and $b_j=R$;
		\item $a_{i+k}=b_{j+k}$ for all integers $1\leq k\leq x-1$;
		\item $a_{i+x}=R$ and $b_{j+x}=L$;
	\end{itemize}
	where the indices are taken modulo the periods ($m$ and $n$ respective to $a$ and $b$). In particular, $\link(k_A, k_B)$ is always a negative integer.
\end{theorem}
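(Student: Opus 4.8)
The plan is to route everything through the Lorenz link model. By the Birman--Williams theorem (as exploited by Ghys), the modular link $k_A \cup k_B$ on $X = \PSL(2,\ZZ)\backslash\PSL(2,\RR)$ is isotopic to the Lorenz link on the standard Lorenz template (branched surface) whose two periodic orbits are encoded by the Lorenz words $W_A$ and $W_B$. The linking number of two null-homologous... wait — actually $k_A$ and $k_B$ are knots in $S^3$ once the trefoil is filled in, so $\link(k_A,k_B)$ is a well-defined integer, and it equals the linking number of the corresponding pair of orbits on the Lorenz template sitting inside $S^3$. So the first step is to reduce the problem to: compute the linking number of two periodic orbits $\gamma_A,\gamma_B$ on the Lorenz template, where $\gamma_A$ carries the cyclic word $W_A$ and $\gamma_B$ carries $W_B$.

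Next I would set up the standard crossing-count description of linking numbers on the template. On the Lorenz template, all crossings have the same sign, so $\link(\gamma_A,\gamma_B)$ is (up to a fixed overall sign) the number of crossings between a strand of $\gamma_A$ and a strand of $\gamma_B$ in a standard planar projection of the template. Crossings on the Lorenz template occur precisely where one strand on the left ear passes over/under a strand coming from the right ear (or vice versa), i.e. at the ``return'' regions of the template. The combinatorial bookkeeping is: two strands of the two orbits cross exactly once for each pair of ``positions'' $(i,j)$ — $i$ a position along $W_A$, $j$ a position along $W_B$ — at which the two bi-infinite itineraries $\loseq(W_A)$ shifted to $i$ and $\loseq(W_B)$ shifted to $j$ are ordered one way at the branch line but, reading forward, cross over to the opposite order. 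The natural linear order on the branch line of the template is exactly the order induced by reading the forward itinerary as a binary expansion with $L < R$ at the first place of disagreement (this is the classical symbolic dynamics of the Lorenz/horseshoe map). Hence a crossing between the $\gamma_A$-strand at position $i$ and the $\gamma_B$-strand at position $j$ is recorded precisely when $a_i \neq b_j$ and the first subsequent disagreement reverses the order — but the relevant strands actually cross on the template only when the current letters are $L$ for one and $R$ for the other in a specific arrangement; tracking the first place $i+x$, $j+x$ where the itineraries split again, with $a_i = L$, $b_j = R$, $a_{i+k}=b_{j+k}$ for $1\le k\le x-1$, and $a_{i+x}=R$, $b_{j+x}=L$, is exactly the condition that the two strands enter the return region on opposite ears, run parallel for $x$ steps, and then separate — producing exactly one crossing. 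Summing over all such triples $(i,j,x)$, with $i$ mod $m$, $j$ mod $n$, $x\ge 0$, gives the total crossing number and hence $-\link(k_A,k_B)$; the uniform sign of Lorenz crossings forces this to be a nonnegative integer, and it is strictly positive (so $\link(k_A,k_B)<0$) because non-conjugate primitive hyperbolic matrices have genuinely distinct, hence ``interleaving,'' itineraries, guaranteeing at least one such triple.

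The main obstacle, and where I would spend the most care, is the precise dictionary between the symbolic order on the branch line and the template geometry: pinning down which of the four local patterns ($L$ vs $R$ on each orbit, and the orientation with which the strands re-cross) actually contributes a crossing, and verifying that each contributing pair of strand-segments contributes exactly one crossing (no double counting, no missed crossings at the template's branch line itself). This amounts to carefully choosing the planar embedding of the template and orienting the flow, then checking the local picture in the return region against the combinatorial condition $a_i=L$, $b_j=R$, matching middle block, $a_{i+x}=R$, $b_{j+x}=L$ — including the degenerate case $x=0$, where the two strands cross immediately. A secondary technical point is justifying the Birman--Williams reduction preserves linking numbers with the trefoil filled in (this is standard, since the template sits in the complement of the trefoil and the collapsing isotopy is ambient in $S^3$), and confirming that changing the representative word of $W_A$ or $W_B$ by a cyclic shift only permutes the index set of triples, so the count is well-defined.
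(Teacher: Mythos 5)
Your proposal is correct and follows essentially the same route as the paper: reduce to the Lorenz template via Ghys's isotopy, then count crossings pairwise over letter-positions using the fact that the order of strands on the branch line is the lexicographic order of their forward itineraries, so that a (uniformly signed) crossing occurs exactly for the pattern $LXR$ versus $RXL$. The points you flag as needing care (which of the four local configurations contributes, the sign convention, invariance under cyclic shift) are exactly the case analysis the paper carries out.
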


Another interpretation of the combinatorial computation in Theorem \ref{thm:mainthm} is we are computing (modulo the periods) occurrences of (possibly empty) words $W'$ such that $LW'R$ appears in $\loseq(W_A)$ and $RW'L$ appears in $\loseq(W_B)$.

For an example of Theorem \ref{thm:mainthm}, consider the matrices $A=\sm{3}{5}{7}{12}$ and $B=\sm{2}{1}{1}{1}$, which correspond to words $W_A=RRLLRL$ and $W_B=LR$. In Figure \ref{fig:twoknots}, we depict the knots $k_A$ and $k_B$ in $\RR^3$ with the trefoil filled in, and can numerically compute that their linking number is $-3$ (using the standard convention that an overcrossing from left to right has sign $+1$).

\begin{figure}[htb]
	\includegraphics[scale=1]{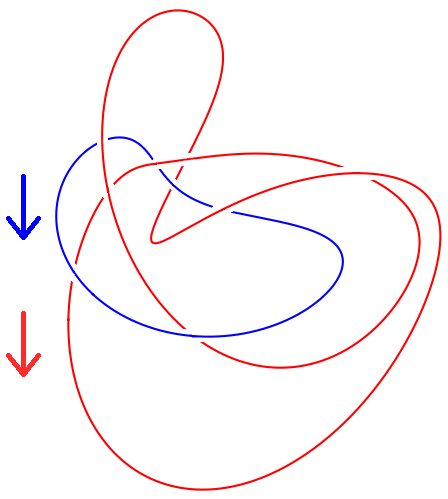}
	\caption{$k_A$ is in red and $k_B$ is in blue. Image created with Matplotlib, \cite{matplotlib}.}\label{fig:twoknots}
\end{figure}

In view of Theorem \ref{thm:mainthm}, we pick up the three triples $(i, j, x)=(3,2,4),(4,2,0),(6,2,0)$, corresponding to

\begin{minipage}{.31\textwidth}
	\begin{align*}
		\ldots\mathbf{RR} & \boxed{\mathbf{LLRL}RR}LLRL\ldots\\
		\ldots\mathbf{L}  & \boxed{\mathbf{R}LRLRL}R\ldots\\
	\end{align*}
\end{minipage}%
\begin{minipage}{.31\textwidth}
	\begin{align*}
		\ldots\mathbf{RRL} & \boxed{\mathbf{LR}}\mathbf{L}RRLLRL\ldots\\
		\ldots\mathbf{L} & \boxed{\mathbf{R}L}R\ldots\\
	\end{align*}
\end{minipage}%
\begin{minipage}{.31\textwidth}
	\begin{align*}
		\ldots\mathbf{RRLLR} & \boxed{\mathbf{L}R}RLLRL\ldots\\
		\ldots\mathbf{L} & \boxed{\mathbf{R}L}R\ldots\\
	\end{align*}
\end{minipage}%

The strategy to prove Theorem \ref{thm:mainthm} is to use Ghys's result that modular links are isotopic to Lorenz links (top of page 272 of \cite{Ghys07}). We then compute the linking number of Lorenz links corresponding to the given Lorenz words using Birman–Williams’ template theory, \cite{BW83}.

\begin{remark}
	Theorem 4.1 of \cite{BW83} proves that the linking number of Lorenz links in non-zero, which is a corollary of Theorem \ref{thm:mainthm}.
\end{remark}
\begin{remark}
	The work of Birman-Williams (along with most other Lorenz knot papers) uses the opposite sign convention for the linking number, so that it is always positive. Since we framed the question in terms of the number theoretic picture, we will instead follow the convention used by Ghys and Duke-Imamo\={g}lu-T\'{o}th in \cite{Ghys07} and \cite{DIT17}.
\end{remark}

\section{Connection to Conway's topograph and quadratic forms}\label{sec:topograph}

Despite the strong connection to Lorenz links, motivation for the formula of Theorem \ref{thm:mainthm} came from the aforementioned works \cite{DIT17} and \cite{JR21}. Given a primitive hyperbolic $A\in\PSL(2, \ZZ)$, denote the geodesic connecting the two roots of $A$ by $\ell_{A}$. This descends to a closed geodesic $\tilde{\ell}_A$ on the modular curve, $\PSL(2, \ZZ)\backslash\uhp$. The unsigned intersection number of two closed geodesics $\ell_1$ and $\ell_2$ , denoted $\Int(\ell_1,\ell_2)$, counts the number of transverse intersections of the (unoriented) curves.

\begin{theorem}[Follows from Theorem 6.3 and Lemma 6.7 of \cite{DIT17}]\label{thm:dit}
	Let $A,B\in\PSL(2, \ZZ)$ be primitive, hyperbolic, and not conjugate to each other or each other's inverse. Then
	\[\link(k_A+k_{A^{-1}}, k_B+k_{B^{-1}})=-\Int(\tl{A}, \tl{B}).\]	
\end{theorem}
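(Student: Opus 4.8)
The plan is to assemble the identity from the two cited results of Duke--Imamo\u{g}lu--T\'oth, the conceptual content being a translation between a linking pairing in the three-manifold $X$ and an intersection pairing on the base orbifold. I would begin by recording the structural fact that makes the left-hand side meaningful: $X=\PSL(2,\ZZ)\backslash\PSL(2,\RR)$ is the unit tangent bundle of the modular orbifold $\PSL(2,\ZZ)\backslash\uhp$, the modular flow is the geodesic flow, and each modular knot $k_A$ is exactly the canonical lift of the closed geodesic $\tl{A}$ (the image of $\ell_A$) to this unit tangent bundle. Because the Rademacher function is odd under inversion, $\mathfrak{R}(A^{-1})=-\mathfrak{R}(A)$, and Ghys's computation of the homology class of a modular knot via $\mathfrak{R}$ then shows that $k_A+k_{A^{-1}}$ is null-homologous in $X$. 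This is the point that lets $\link(k_A+k_{A^{-1}},\,k_B+k_{B^{-1}})$ be defined intrinsically, without filling in the trefoil, and it is why the theorem is stated only for the symmetrized links.

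Next I would invoke Theorem 6.3 of \cite{DIT17}, which evaluates this linking number analytically: it expresses $\link(k_A+k_{A^{-1}},\,k_B+k_{B^{-1}})$ as the value of their weight-$2$ generalization of the Rademacher cocycle, obtained by pairing a class Poincar\'e-dual to one null-homologous link against a surface bounded by the other. Computing a linking number this way is exactly the standard recipe ``linking $=$ cohomological pairing against a Seifert surface'', adapted to the cusped quotient through the period of a weight-$2$ cocycle; the symmetrization $A\mapsto\{A,A^{-1}\}$ is what guarantees that both cycles bound and makes the resulting pairing finite and well-defined.

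I would then apply Lemma 6.7 of \cite{DIT17}, which reinterprets that same cocycle value geometrically as the intersection count of the two closed geodesics downstairs on $\PSL(2,\ZZ)\backslash\uhp$. Conceptually, the analytic pairing localizes at the points where the projected curves $\tl{A}$ and $\tl{B}$ meet: each transverse intersection on the base contributes a single crossing between the lifted orbits in the unit tangent bundle, and summing these contributions recovers $\Int(\tl{A},\tl{B})$ up to the global sign fixed by the chosen orientation convention. The hypothesis that $A$ and $B$ are not conjugate to each other or to each other's inverse ensures $\tl{A}\ne\tl{B}$ as unoriented geodesics, so the intersections are genuinely transverse and finite in number, and the count is honestly $\Int$.

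The main obstacle, and the place I would spend the most care, is the sign and multiplicity bookkeeping in passing from the cocycle value to the unsigned count $\Int(\tl{A},\tl{B})$. Two things must be checked: first, that the four oriented linking contributions coming from the pairs $\{k_A,k_{A^{-1}}\}\times\{k_B,k_{B^{-1}}\}$ collapse, after symmetrization, onto the orientation-independent intersection number rather than a signed one --- this is precisely where adjoining the inverse knot removes the orientation dependence; and second, that each base intersection contributes with the same sign, yielding the overall minus sign and hence $-\Int(\tl{A},\tl{B})$ under the Ghys--DIT convention (the one opposite to Birman--Williams). Once these normalizations are matched, the identity follows by concatenating Theorem 6.3 and Lemma 6.7.
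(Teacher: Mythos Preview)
Your proposal is aligned with the paper's approach: the paper does not give an independent proof of this theorem at all, but simply records it as a consequence of Theorem~6.3 and Lemma~6.7 of \cite{DIT17}, exactly the two inputs you invoke. Your sketch is a reasonable elaboration of how those two results combine (null-homologous symmetrized links via the oddness of $\mathfrak{R}$, Theorem~6.3 giving the linking number as a cocycle value, Lemma~6.7 identifying that value with the geodesic intersection count, and a sign check under the Ghys--DIT convention), so there is nothing to correct and nothing materially different from what the paper does.
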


While it's not obvious how this intersection number can break up into a sum of four linking numbers, a natural decomposition appears when we consider the Conway topographs of the corresponding quadratic forms.

Given a primitive hyperbolic matrix $A=\genmtx\in\PSL(2, \ZZ)$, the equation $Ax=x$ translates to $cx^2+(d-a)x-b=0$. Let $g=\gcd(c, d-a, b)$, and the quadratic form associated to $A$ is
\[q_A(x, y):=\dfrac{1}{g}\left(cx^2+(d-a)xy-by^2\right):=\dfrac{1}{g}[c, d-a, -b].\]
This is a primitive integral indefinite binary quadratic form, and conjugacy classes of matrices correspond to $\PSL(2,\ZZ)$-equivalence classes of quadratic forms. Furthermore, this is a bijection, with the inverse map realized by taking the automorph of $q_A$ (for more details, see Proposition 1.4 of \cite{Sar82}).

The Conway topograph is a combinatorial object associated to a quadratic form (for a more comprehensive study of the topograph, see Chapter 4 of \cite{Hat22}, or Section 3 of \cite{JR21}). The base object of the topograph is an infinite connected 3-regular graph embedded in $\RR^2$. In particular, if we add an orientation to an edge, there is a well-defined notion of ``left'' and ``right''. A path can therefore be represented by a word in $L$ and $R$, denoting left and right respectively.

The topograph for $q(x, y)$ divides $\RR^2$ into regions, and numbers can be placed in the regions and on the edges such that:
\begin{itemize}
	\item Numbers in regions represent values properly represented by $q(x, y)$ (i.e. $q(x, y)$ when $x,y\in\ZZ$ are coprime);
	\item If an edge contains the number $b$ and is adjacent to regions with numbers $a,c$, then $[a, \pm b, c]$ is a quadratic form similar to $q$. In fact, the entire equivalence class of forms similar to $q$ arises in this fashion.
	\item By assigning a ``positive direction'' to each edge, we can determine if we need to take $+b$ or $-b$ in the form. 
\end{itemize}
See Figure \ref{fig:top1} for part of the topograph of $q(x, y)=7x^2+9xy-5y^2$.

\begin{figure}[htb]
	\includegraphics[scale=1]{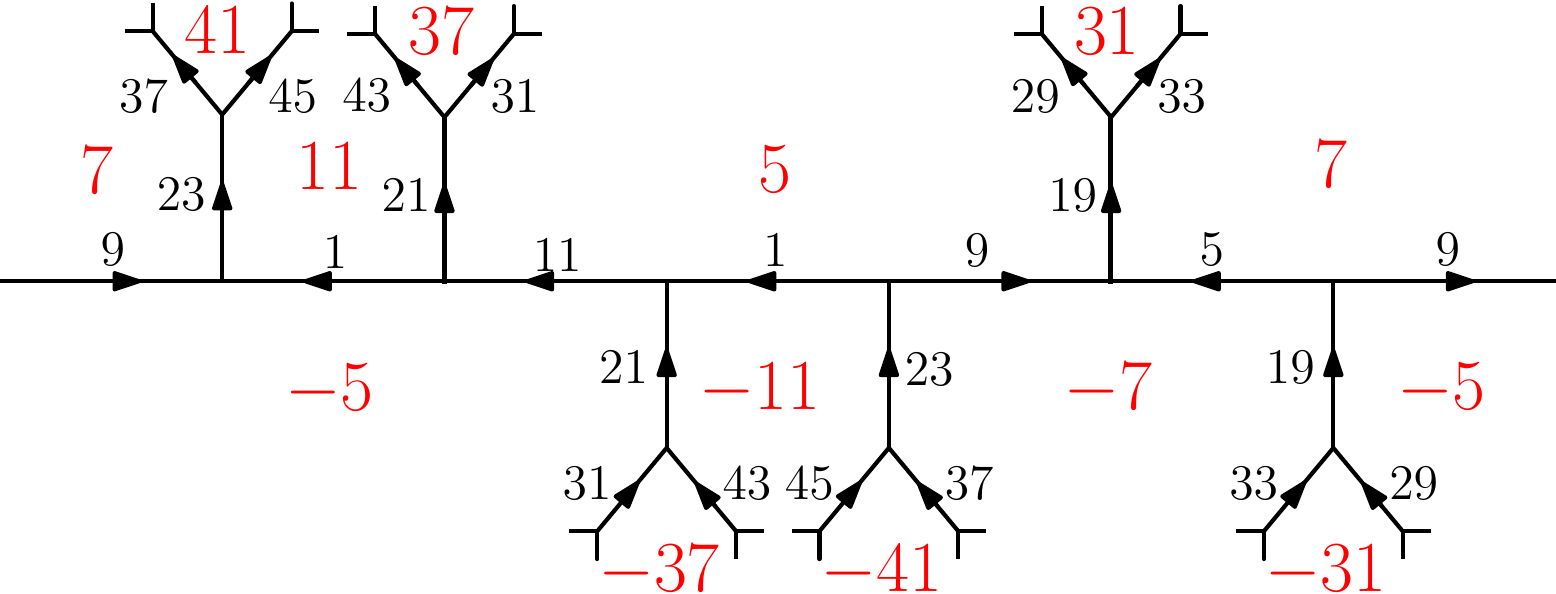}
	\caption{Topograph of $[7, 9, -5]$. Red numbers are in regions, and black numbers are on edges. The direction of the arrow determines the sign of the edge numbers.}\label{fig:top1}
\end{figure}

When $q$ is indefinite, there is a doubly infinite path (called the \textit{river}) that separates the positive and negative numbers placed in regions. The river is periodic with minimal period of length at least 2, and therefore can be represented as a Lorenz sequence.

\begin{definition}
	Let $q(x, y)$ be a primitive integral indefinite binary quadratic form. A \textit{river word} for $q$ is a Lorenz word that represents the minimal period of the path taken by the river on the topograph of $q$. It has length at least 2, and is unique up to cyclic shift. Let $\Riv(q)$ denote the set of river words of $q$
\end{definition}

\begin{remark}[Remark 3.5 of \cite{JR21}]\label{rem:contfrac}
Let $q=[a, b, c]$ have discriminant $D$, and let the continued fraction of $\frac{-b+\sqrt{D}}{2a}$ (the first root of $q$) be 
\[[a_0,a_1,\ldots]=[a_0,a_1,\ldots,a_s,\overline{a_{s+1},\ldots,a_{s+p}}],\]
where $s$ is the smallest integer such that the continued fraction is periodic after index $s$, and $p$ is the smallest \textit{even} integer such that the sequence has period $p$. Consider the sequence of $0$'s and $1$'s formed by:
\begin{itemize}
	\item $s+1\pmod{2}$ repeated $a_{s+1}$ times;
	\item $s+2\pmod{2}$ repeated $a_{s+2}$ times;
	\item $\cdots$
	\item $s+p\pmod{2}$ repeated $a_{s+p}$ times.
\end{itemize} 
If we replace the $0$s with $L$s and $1$s with $R$s, then the word formed is in $\Riv(q)$. For example, the first root of $[7, 9, -5]$ is $\frac{-9+\sqrt{221}}{14}$, which has continued fraction $[0,\overline{2,2,1,1}]$. This gives the river word $RRLLRL$, which agrees with Figure \ref{fig:top1}: start on the left hand side at $[7,9,-5]$, walk via $RRLLRL$, and you end up at another river edge representing $[7,9,-5]$.
\end{remark}

When drawing the topograph of an indefinite form, the river is normally ``flattened'' with the positive numbers above the river.

\begin{remark}
	Let $q=[a, b, c]$ be a ``positive river form'', i.e. $a>0$ and $c<0$, and let $W$ be the river word formed by starting at $q$. Then $\mat(W)$ is the automorph of $q$, i.e. $\mat(W)\circ q=q$.
\end{remark}

The topograph machinery can also be used to give a nice proof of Lemma \ref{lem:loren}. If $A$ is a primitive hyperbolic matrix, then $\Riv(q_A)=\loren{A}$.

To connect this back to Theorem \ref{thm:dit}, let $A,B\in\PSL(2,\ZZ)$ be primitive and hyperbolic. Let $T_A$ be the topograph associated to $q_A$, and let $T_B$ be the topograph associated to $q_B$. By picking an oriented edge of each topograph, we can superimpose one on top of the other, so that a region has an associated ordered pair: the number from $T_A$, followed by the number from $T_B$. There are four possibilities for the signs of the numbers in this pair: $(+,+),(+,-),(-,+),(-,-)$, and it turns out that we get an intersection between $\tl{A}$ and $\tl{B}$ if all four combinations appear (see \cite{JR21} for more details). This is equivalent to the rivers of the topgraphs starting off disjoint, meeting, and then crossing each other.

In particular, if we have ``intersecting topographs'', then you can translate either topograph by the corresponding river word to get another pair of intersecting topographs. This equivalence gives rise to the same intersection point on the modular curve, and is the only way to produce the same point. Theorem \ref{thm:inttop} follows.

\begin{theorem}[Theorem 5 of \cite{JR21}]\label{thm:inttop}
	Let $A,B\in\PSL(2, \ZZ)$ be primitive and hyperbolic, corresponding to respective topographs $T_A$ and $T_B$. Then $\Int(\tl{A}, \tl{B})$ is equal to the number of ways to superimpose $T_B$ on top of $T_A$ so that the rivers meet and cross, modulo the periods of the rivers.
\end{theorem}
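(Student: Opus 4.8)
The plan is to transport everything to the upper half-plane $\uhp$ via the classical identification of the Conway topograph of an indefinite form $q$ with the trivalent tree $\mathcal{T}$ dual to the Farey tessellation $\mathcal{F}$ of $\uhp$ (whose ideal triangles are the $\PSL(2,\ZZ)$-translates of the triangle with vertices $0,1,\infty$): regions of the topograph correspond to the ideal vertices of $\mathcal{F}$ (the cusps $\QQ\cup\{\infty\}$, each carrying the associated value of $q$) and edges of the topograph to edges of $\mathcal{F}$. Under this dictionary the river of $q_A$ becomes the bi-infinite geodesic path $r_A$ in $\mathcal{T}$ whose edges are exactly the edges of $\mathcal{F}$ that separate the two roots of $q_A$, which are the endpoints of $\ell_A$ on $\partial\uhp$; since the automorph $\alpha_A$ of $q_A$ --- the generator of the infinite cyclic stabilizer of $\ell_A$ in $\PSL(2,\ZZ)$ --- preserves $\ell_A$ setwise, the path $r_A$ is periodic with period a river word and $\alpha_A$ acts on $r_A$ by translation by one period. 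Because $\PSL(2,\ZZ)$ acts freely and transitively on the oriented edges of $\mathcal{T}$, a superposition of $T_B$ on $T_A$ (a gluing of a distinguished oriented edge of $T_B$ onto one of $T_A$) is the same datum as an element $g\in\PSL(2,\ZZ)$; comparing rivers after superposition means comparing $r_A$ with $g\cdot r_B$ inside $\mathcal{T}$, and translating either river by its own period corresponds to multiplying $g$ on the appropriate side by $\alpha_A$ or $\alpha_B$. Finally, by a standard covering-space argument a transverse intersection point of $\tl{A}$ and $\tl{B}$ on the modular curve lifts to a single $\PSL(2,\ZZ)$-orbit of transverse intersection points in $\uhp$, and since two distinct complete geodesics of $\uhp$ meet in at most one point this gives a bijection between $\tl{A}\cap\tl{B}$ and the set of $\PSL(2,\ZZ)$-orbits, for the diagonal action, of pairs of lifts $(\gamma\ell_A,\gamma'\ell_B)$ crossing transversally; normalizing $\gamma$ identifies this set with the double cosets in $\langle\alpha_A\rangle\backslash\PSL(2,\ZZ)/\langle\alpha_B\rangle$ represented by those $g$ with $\ell_A$ and $g\ell_B$ crossing transversally. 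The theorem therefore reduces to the dictionary claim below.

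\emph{Dictionary claim.} For $g\in\PSL(2,\ZZ)$, the geodesics $\ell_A$ and $g\ell_B$ cross transversally in $\uhp$ if and only if the bi-infinite paths $r_A$ and $g\cdot r_B$ overlap in a nonempty finite segment; moreover, whenever they do the overlap is automatically a crossing, meaning that at each of its two end-vertices the two paths leave along the two distinct remaining edges at that vertex --- equivalently, all four sign combinations occur among the ordered pairs of region-values of the superimposed topographs. I would prove this by tree-and-boundary combinatorics alone. Any two bi-infinite geodesics in a tree meet in a (possibly empty) connected subtree; since $\mathcal{T}$ is trivalent and a geodesic uses two of the three edges at each of its vertices, a nonempty overlap of $r_A$ and $g\cdot r_B$ must already contain an edge, and at each end-vertex of the overlap the two paths are forced to depart along the two edges that the overlap does not use (otherwise the overlap would extend). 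Passing to the circle $\partial\uhp$, deleting those two non-overlap edges at an end-vertex partitions $\partial\uhp$ into arcs that sort the four endpoints of $\ell_A$ and $g\ell_B$; tracking which endpoint falls in which arc shows the two endpoint pairs interlink exactly when the overlap is a nonempty finite segment, hence exactly when the two complete geodesics cross transversally --- an empty overlap yields unlinked endpoint pairs, a half-infinite overlap a common endpoint, and an all-of-$\mathcal{T}$ overlap the equality $\ell_A=g\ell_B$. The ``four sign combinations'' reformulation follows because the river of a form separates the cusps on which the form is positive from those on which it is negative, and two transversally crossing complete geodesics cut $\uhp$ into exactly four sectors, each meeting $\partial\uhp$ and hence containing cusps by density of $\QQ\cup\{\infty\}$.

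Combining the two ingredients, $\Int(\tl{A},\tl{B})$ is the number of double cosets in $\langle\alpha_A\rangle\backslash\PSL(2,\ZZ)/\langle\alpha_B\rangle$ represented by a $g$ for which $r_A$ and $g\cdot r_B$ meet and cross, and this is exactly the number of superpositions of $T_B$ on $T_A$ whose rivers meet and cross, counted modulo the periods of the two rivers.

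I expect the real work to be bookkeeping rather than any single hard estimate: (i) confirming that $\alpha_A$ acts on $r_A$ as translation by exactly one period, so that the subgroups $\langle\alpha_A\rangle$ and $\langle\alpha_B\rangle$ genuinely implement ``modulo the periods''; (ii) controlling the degenerate cases of the dictionary claim --- empty overlap, common endpoint, coincident geodesics --- so that ``crossing superposition $\leftrightarrow$ transverse intersection point'' is a true bijection with no over- or under-counting (which is cleanest under the tacit hypothesis that $A$ and $B$ are not conjugate, so $\tl{A}\neq\tl{B}$); and (iii) the covering-space step equating $\Int(\tl{A},\tl{B})$ with the orbit count, which relies on the standard facts that distinct complete geodesics of $\uhp$ are automatically transverse where they meet and meet at most once.
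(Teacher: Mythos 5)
Your overall architecture---identifying the topograph with the trivalent tree dual to the Farey tessellation, the river of $q_A$ with the tree-geodesic of edges separating the two roots of $A$, superpositions with elements of $\PSL(2,\ZZ)$ via the simply transitive action on oriented edges, and intersection points of $\tl{A}$ and $\tl{B}$ with double cosets $\langle\alpha_A\rangle g\langle\alpha_B\rangle$ admitting a crossing representative---is the right framework, and it is essentially how \cite{JR21} (to which the paper defers for this theorem) proceeds.

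However, your central ``dictionary claim'' is false, and the failure occurs exactly at the feature the theorem is designed to capture. It is not true that $\ell_A$ and $g\ell_B$ cross transversally if and only if $r_A$ and $g\cdot r_B$ overlap in a nonempty finite segment. Consider four ends of the tree spanning an ``H'': legs to $w,x$ at a vertex $m_1$, legs to $y,z$ at a vertex $m_2$, central segment $[m_1,m_2]$, with circular order $w,x,y,z$ on $\partial\uhp$. The tree-geodesics $[x,y]$ and $[w,z]$ both contain $[m_1,m_2]$, and at each of $m_1,m_2$ they depart along the two distinct remaining edges---so they satisfy your proposed crossing criterion---yet $\{x,y\}$ and $\{w,z\}$ are unlinked on the circle, so the corresponding hyperbolic geodesics are disjoint. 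The mistake is conflating ``the two paths leave along distinct edges at each end-vertex of the overlap'' (which, as you yourself note, is automatic, since otherwise the overlap would extend) with ``the two paths cross.'' Crossing is a statement about the planar embedding: the path that enters the shared segment from the left must exit to the right and vice versa; if each path stays on its own side, the rivers meet and \emph{bounce}, only three of the four sign combinations $(\pm,\pm)$ occur among the superimposed region values, and the geodesics do not intersect. Such bouncing superpositions genuinely occur, so your count would systematically overestimate $\Int(\tl{A},\tl{B})$ by their number. Your boundary argument does not detect this because deleting the two non-overlap edges at a \emph{single} end-vertex only shows that one endpoint of each geodesic lies in its own arc while the remaining two share the third arc, which is insufficient to decide interlinking; one must compare the planar left/right data at \emph{both} end-vertices of the overlap. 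The correct equivalence to prove is: transverse crossing $\Leftrightarrow$ nonempty finite overlap \emph{with} side-swapping at its two ends $\Leftrightarrow$ all four sign combinations appear, which is precisely the ``meet and cross'' condition in the statement.
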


Consider the combinatorics of Theorem \ref{thm:inttop}: start with $T_A$, flattened so that the river $R_A$ is horizontal. In order to meet and cross, the river $R_B$ for $T_B$ can meet $R_A$ from either the left or the right hand side (i.e. top/bottom in terms of the picture). It can then flow in the same or the opposite direction, leading to 4 possibilities.

\begin{definition}
	Let $\IntRS(A, B)$ denote the topograph intersection number where $R_B$ joins $R_A$ from the right hand side, and the rivers flow in the same direction. Call this the RS-intersection number.
\end{definition}

The RS-intersection number is exactly the quantity we want.

\begin{theorem}\label{thm:topint}
	The topograph RS-intersection number, $\IntRS(A, B)$, coincides with $-\link(k_A, k_B)$.
\end{theorem}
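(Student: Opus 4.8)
The plan is to identify $\IntRS(A,B)$ with the quantity counted in Theorem~\ref{thm:mainthm}, after which the equality with $-\link(k_A,k_B)$ is immediate from that theorem.

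First I would set the stage. Since $W_A\in\loren A=\Riv(q_A)$ and $W_B\in\loren B=\Riv(q_B)$, the underlying graph of each topograph is the $3$-regular tree, and the river $R_A$ (respectively $R_B$) is a bi-infinite path in it whose sequence of left/right turns, read in a fixed flow direction, is the periodic sequence $\loseq(W_A)$ (respectively $\loseq(W_B)$). Because $A$ and $B$ are not conjugate, $q_A\not\sim q_B$, so $W_A$ and $W_B$ are not cyclically equivalent; hence after any superimposition of $T_B$ onto $T_A$ the rivers $R_A$ and $R_B$ are distinct bi-infinite paths in a tree, and their intersection $R_A\cap R_B$ is a single finite sub-path $P$ (possibly empty, possibly a single edge). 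By Theorem~\ref{thm:inttop}, the superimposition contributes to $\Int(\tl A,\tl B)$ precisely when $P$ contains an edge and the two rivers leave $P$ on opposite sides at its two ends, and intersection points correspond to such superimpositions taken modulo translating $T_A$ by $W_A$ and $T_B$ by $W_B$.

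Next comes the local analysis at the two endpoints of $P$. Let $v$ be the vertex at which $R_B$ joins $R_A$ and $v'$ the one at which they separate. The edge of $R_B$ entering $v$ is exactly the edge of the tree at $v$ not lying on $R_A$, so the phrase ``$R_B$ joins from the right-hand side'' fixes on which side of $R_A$ that edge sits; reading turn letters in the common flow direction, this together with ``the rivers flow in the same direction'' forces $W_A$ to make an $L$-turn and $W_B$ an $R$-turn at $v$, forces $v'$ to be a crossing rather than a bounce, and forces $W_A$ to make an $R$-turn and $W_B$ an $L$-turn at $v'$; along the interior of $P$ the two rivers turn identically, producing a common word $W'$. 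Recording the position of $v$ along $\loseq(W_A)$ (well defined modulo $m$), the position of $v$ along $\loseq(W_B)$ (well defined modulo $n$), and $x:=|W'|$ then matches RS-superimpositions, modulo the two river translations, bijectively with the triples $(i,j,x)$ of Theorem~\ref{thm:mainthm}. Consequently $\IntRS(A,B)$ equals that count, which is $-\link(k_A,k_B)$ by Theorem~\ref{thm:mainthm}.

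The step I expect to be the main obstacle is the orientation and side bookkeeping inside the local analysis: one must verify that ``right-hand side, same direction'' produces exactly the pattern ``$L\cdots R$ in $W_A$ and $R\cdots L$ in $W_B$'', as opposed to one of the three other sign patterns (which account for the remaining three of the four pieces of $\Int(\tl A,\tl B)$), and that the normalization ``modulo the river periods'' lines up with the ranges $1\le i\le m$ and $1\le j\le n$ in Theorem~\ref{thm:mainthm}. One should also treat the degenerate case in which $P$ is a single edge ($x=0$, the rivers cross immediately) and confirm, using the tree structure of the topograph, that a single superimposition cannot produce two distinct common segments.
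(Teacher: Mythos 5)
Your proposal is correct and follows essentially the same route as the paper: translate the ``joins from the right, flows in the same direction, then crosses'' condition into the pattern $a_i=L$, $b_j=R$, $a_{i+k}=b_{j+k}$ for $1\le k\le x-1$, $a_{i+x}=R$, $b_{j+x}=L$, identify this with the triples of Theorem~\ref{thm:mainthm}, and then invoke the Lorenz-template proof of that theorem. Your treatment of the tree structure (the shared sub-path $P$ and the endpoint analysis) is more detailed than the paper's rather terse version, but it is the same argument.
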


Before giving the proof, consider the example from Figure \ref{fig:twoknots}, i.e. $A=\sm{3}{5}{7}{12}$ and $B=\sm{2}{1}{1}{1}$, corresponding to river words $RRLLRL$ and $LR$. The topograph for $A$ was displayed in Figure \ref{fig:top1}, and all 3 possible RS-intersections are shown in Figure \ref{fig:topint}.

\begin{figure}[htb]
	\includegraphics[scale=0.85]{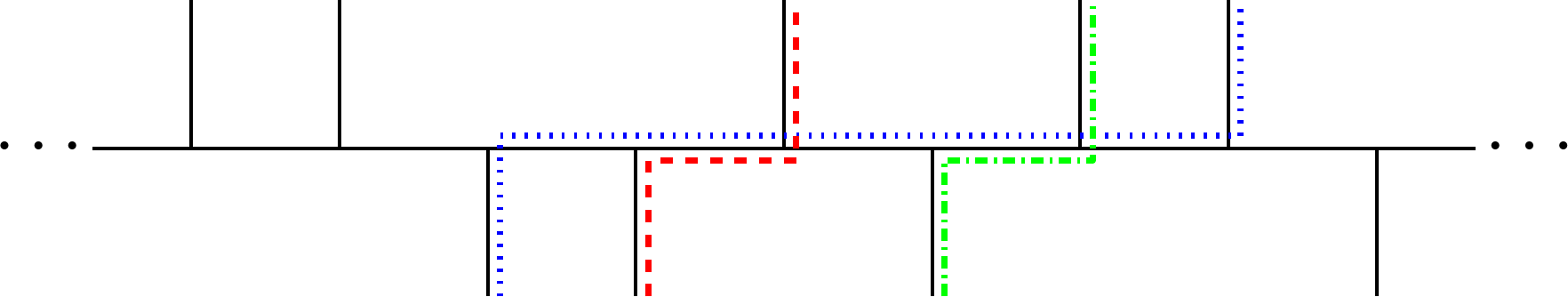}
	\caption{RS-intersection of $RRLLRL$ and $LR$. One and a half periods of the topograph of $RRLLRL$ are shown in solid black, and the three RS-intersections with $LR$ are in blue/red/green and dotted/dashed.}\label{fig:topint}
\end{figure}

\begin{proof}[Proof of Theorem \ref{thm:topint}]
Consider how to combinatorially compute $\IntRS$. Let $R_A$ correspond to the word $a_1a_2\cdots a_m$, and let $R_B$ correspond to the word $b_1b_2\cdots b_n$. For $R_B$ to join $R_A$ from the right hand side, we must have a pair of indices $(i, j)$ with $1\leq i\leq m$ and $1\leq j\leq n$ such that $a_i=L$ and $b_j=R$. Then the rivers flow in the same direction for $x\in\ZZ^{\geq 0}$ steps, corresponding to $a_{i+k}=b_{j+k}$ for $1\leq k\leq x-1$. Since $R_B$ crosses $R_A$, we must exit with $a_{i+x}=R$ and $b_{j+x}=L$. In particular, this is exactly the same description as provided in Theorem \ref{thm:mainthm}, so it follows from the proof in Section \ref{sec:proof}.
\end{proof}

\begin{remark}
	On the topograph side, $\IntRS(A, B)$ is still defined when $A$ and $B$ are conjugate. On the modular curve side of things, this corresponds to transverse self-intersections of the geodesic. However, it is not entirely clear what the interpretation should be in terms of knot theory. If $A=\sm{2}{1}{1}{1}$, then $\IntRS(A, A)=1$, whereas $k_A$ is the unknot. Perhaps there is a natural framing of the modular knots so that $\IntRS(A, A)$ becomes the self-linking number.
\end{remark}

\section{Lorenz knots}

In an attempt to model atmospheric convection, meteorologist Edward Norton Lorenz came up with the following system of differential equations (where $t$ represents time):
\[\dfrac{dx}{dt}=10(y-x),\quad \dfrac{dy}{dy}=28x-y-xz,\quad \dfrac{dz}{dt}=xy-\dfrac{8}{3}z.\]
By picking an initial starting point, the ODE follows a path to determine a flow. See Figure \ref{fig:lorenz} for an example.

\begin{figure}[htb]
	\includegraphics[scale=0.6]{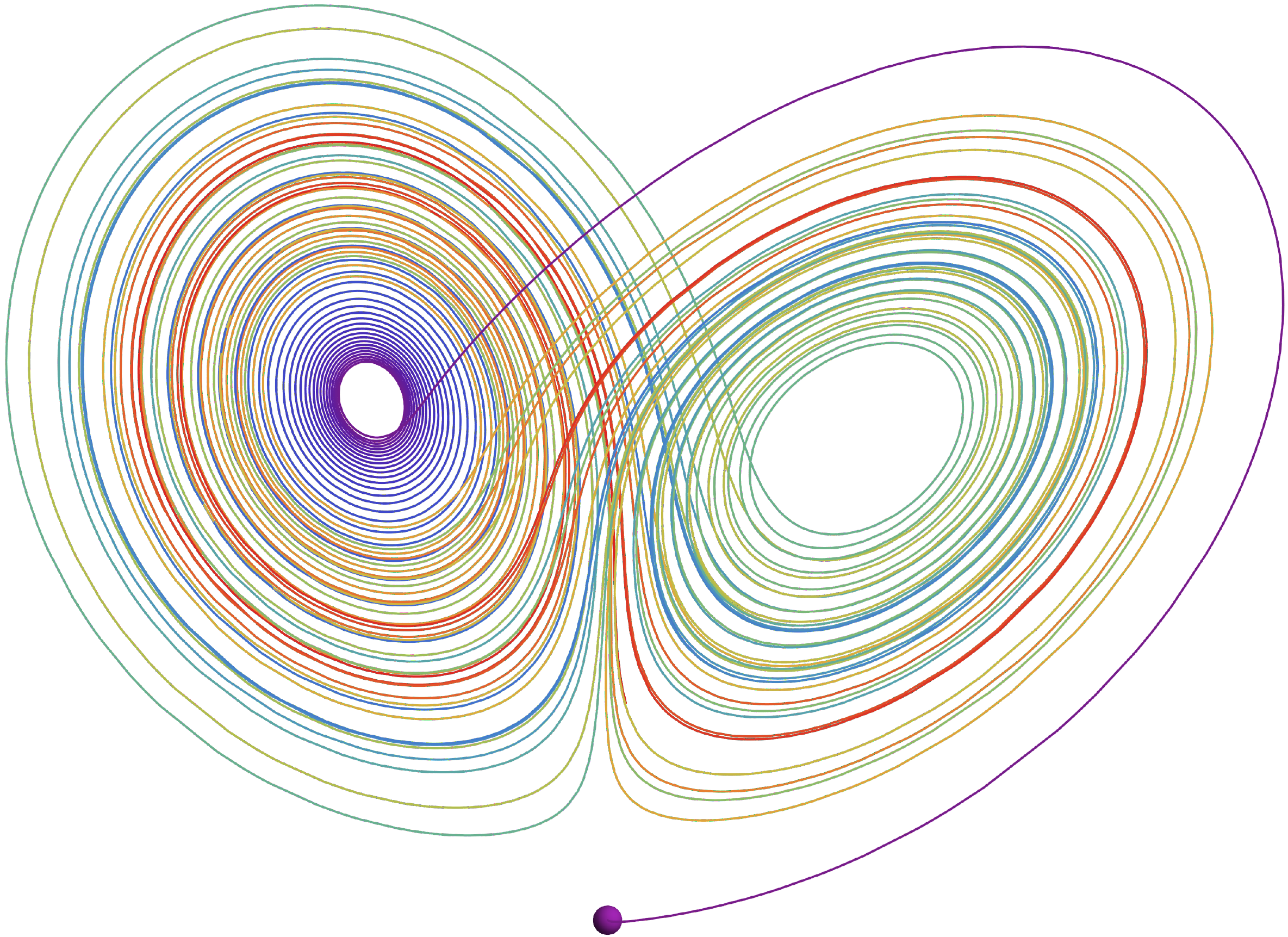}
	\caption{A sample solution to the Lorenz system. Image created with Mathematica, \cite{mathematica}.}\label{fig:lorenz}
\end{figure}

In \cite{Lorenz63}, Lorenz proved that while paths must all eventually enter and remain in a bounded region, they are very susceptible to miniscule changes in the initial inputs. This observation led to the beginning of chaos theory.

We will study a different aspect of this theory, namely the knots formed as solutions to Lorenz's equations. A \textit{Lorenz knot} is defined to be a closed periodic orbit of this ODE, and a \textit{Lorenz link} is a set of Lorenz knots.

Lorenz knots/links can be studied with template theory, introduced by Birman and Williams in \cite{BW83}. As seen in Figure \ref{fig:lorenz}, the solutions seem to form a ``butterfly'': looping around one of two circles. This can be made precise with the \textit{Lorenz template}, which is a branched surface in $\RR^3$ with a semi-flow that describes the behaviour of Lorenz knots. See Figure \ref{fig:template} for a depiction of the Lorenz template.

\begin{figure}[h]
	\includegraphics[scale=1]{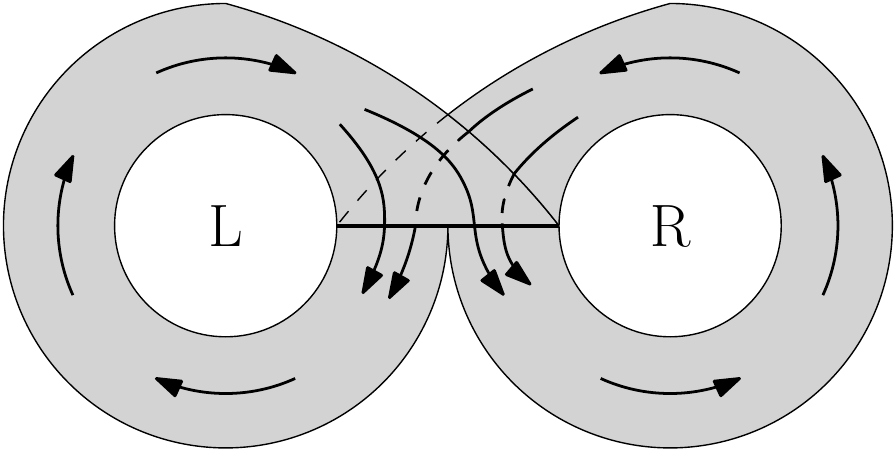}
	\caption{The Lorenz template.}\label{fig:template}
\end{figure}

\begin{remark}
	The basic tools to create the Lorenz template came from the work of Guckenheimer and Williams in \cite{GuckWill79}, \cite{Will79}. The proof that the template actually corresponds to Lorenz's original equations is due to Tucker in \cite{Tucker02}.
\end{remark}

Let the branch (the piece connecting the left and right circles) be $[0, 1]$. A path starting at $x\in(0, 1)-\{1/2\}$ will wind around the left (if $x<1/2$) or the right (if $x>1/2$) loop, ending back up at $f(x)\in(0, 1)$. Furthermore, we have the following properties:
\begin{itemize}
	\item $f(x)\neq x$ for all $x\in(0, 1)-\{1/2\}$;
	\item $f$ restricted to $(0, 1/2)$ is a continuous increasing bijection with $(0, 1)$;
	\item $f$ restricted to $(1/2, 1)$ is a continuous increasing bijection with $(0, 1)$;
	\item flows corresponding to $x$ and $x'$ with $0<x<x'<1/2$ (or $1/2<x<x'<1$) do not cross;
	\item flows coming from the left loop meet the branch above flows coming from the right loop (denoted by dotted lines in the figure).
\end{itemize}

In fact (see Section 2.4 of \cite{BW83}), the function $f(x)=2x\pmod{1}$ will model the Lorenz template.

\begin{remark}
	In Figure \ref{fig:lorenz}, the right hand side of the orbits come back to the branch in front of the left hand side orbits. This is opposite to what is done in the Lorenz template, Figure \ref{fig:template}. I'm not certain of why this is happens, but it is the convention used in \cite{BW83} and \cite{Ghys07}, so we will follow it. In any case, the only side affect is a potential difference in sign.
\end{remark}

Since a Lorenz knot is a periodic closed flow, we can associate a word to it via the sequence of $L$'s and $R$'s it follows on the template.

\begin{definition}
	The \textit{Lorenz word} of a Lorenz knot is the sequence of $L$'s and $R$'s that the knot visits as it travels the template over one period.
\end{definition}

It is clear that the Lorenz word is only defined up to cyclic shifts. Note the similarity between the Lorenz word of a Lorenz knot and the Lorenz word of a primitive hyperbolic matrix! By the remarkable work of Ghys in \cite{Ghys07}, these two worlds are the same.

\begin{theorem}[Ghys, 2006]\label{thm:ghys}
	Consider a modular link corresponding to distinct Lorenz words $W_1, W_2, \ldots, W_n$. This coincides (knot-theoretically) with the Lorenz link corresponding to Lorenz words $W_1, W_2, \ldots, W_n$.
\end{theorem}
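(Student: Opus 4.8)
\emph{Proof idea.} The plan is to recall Ghys's argument \cite{Ghys07}; here is the route I would take. The first move is to geometrize the modular flow: using the simply transitive action of $\PSL(2,\RR)$ on the unit tangent bundle $T^1\uhp$, identify $X=\PSL(2,\ZZ)\backslash\PSL(2,\RR)$ with the unit tangent bundle of the modular orbifold $M=\PSL(2,\ZZ)\backslash\uhp\cong S^2(2,3,\infty)$, so that right multiplication by $\phi(t)$ becomes the geodesic flow on $T^1M$. Filling the trefoil back in turns $X$ into $S^3$ (as recalled in the introduction, following \cite{Milnor71}), so any isotopy of links performed inside $X$ is in particular an isotopy inside $S^3$. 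Under this dictionary $k_A$ is, up to reparametrization, the closed geodesic $\tl{A}$, and as $A$ runs over primitive hyperbolic conjugacy classes these exhaust the primitive closed geodesics of $M$; a modular link is a disjoint union of such geodesics.

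I would then set up the symbolic dynamics on the geodesic side so that it matches the paper's number-theoretic bookkeeping verbatim. Record a closed geodesic of $M$ by its cutting sequence across the Farey tessellation --- equivalently, by the sequence of left/right turns it makes across the edges of the dual trivalent tree; for a closed geodesic this is a periodic word in $L$ and $R$. By Remark \ref{rem:contfrac} this cyclic word, computed from the continued fraction of a fixed point of $A$, is exactly an element of $\loren{A}=\Riv(q_A)$, i.e. a word $W$ with $\mat(W)$ conjugate to $A$. So the correspondence ``primitive hyperbolic conjugacy class $\leftrightarrow$ cyclic $L,R$-word'' is the same object whether it is read off $A$ or off $\tl{A}$.

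The heart of the proof --- and the step I expect to be the main obstacle --- is to realize the geodesic flow on $T^1M$ by the Lorenz template. Concretely one wants an embedded branched surface $\mathcal{T}\subset X$, carrying a semiflow, such that: every periodic orbit of the modular flow can be pushed, by an ambient isotopy of $X$, onto a periodic orbit of the semiflow on $\mathcal{T}$ (and any finite collection of them simultaneously, staying disjoint); the single branch line, parametrized by $[0,1)$, has first-return map the doubling map $x\mapsto 2x\pmod{1}$; and the two sheets meet the branch line in the Lorenz pattern, with strands from the left loop passing in front of strands from the right loop. Ghys carries this out via the realization of $X$ as (essentially) the space of unimodular lattices in $\RR^2$ together with an analysis of how a lattice's short vectors evolve under $\phi(t)$ --- this is the common source of the continued-fraction combinatorics of Remark \ref{rem:contfrac} and of the doubling-map structure of the template, and a periodic orbit corresponds to a lattice with real multiplication on which $A$ acts as the fundamental unit. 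Proving that such a $\mathcal{T}$ exists and carries every periodic orbit is the technical core; pinning down the sheet ordering is the delicate bookkeeping that makes the resulting diagram a genuine Lorenz diagram and fixes the sign conventions used later in Theorem \ref{thm:mainthm}.

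Granting $\mathcal{T}$, the conclusion is formal. Given distinct Lorenz words $W_1,\dots,W_n$, put $A_i=\mat(W_i)$; by the previous steps $\bigcup_i k_{A_i}$ is isotopic inside $X$, hence inside $S^3$, to its image on $\mathcal{T}$, and that image is the disjoint union of the closed semiflow orbits whose itineraries across the branch line are the periodic words $W_i$ --- precisely the Lorenz knots with Lorenz words $W_1,\dots,W_n$ on the standard Lorenz template. Distinctness of the cyclic words $W_i$ makes the components pairwise distinct (they hit the branch line in distinct periodic points of the doubling map), so the link is exactly the Lorenz link associated to $W_1,\dots,W_n$.
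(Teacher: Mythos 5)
The paper does not prove this statement: it is imported directly from Ghys (top of p.~272 of \cite{Ghys07}), so there is no in-paper argument to compare yours against. Judged as a reconstruction of Ghys's proof, your outline has the right skeleton --- identify $X$ with the unit tangent bundle of the modular orbifold so that $\phi(t)$ becomes the geodesic flow, match the cutting-sequence/continued-fraction coding of closed geodesics with $\loren{A}$ as in Remark \ref{rem:contfrac}, and then push all periodic orbits onto an embedded Lorenz template --- and your bookkeeping of the conventions (doubling map as first-return map, left sheet meeting the branch line in front of the right sheet) agrees with the paper's Section 4.

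The gap is that the step you label ``the heart of the proof'' and then grant is the entire content of the theorem. Everything before and after it is a dictionary; the theorem is precisely the assertion that there is an embedded branched surface $\mathcal{T}\subset X$ carrying a semiflow conjugate to the Lorenz semiflow, such that any finite collection of periodic orbits of the modular flow is simultaneously ambient-isotopic in $X$ to the corresponding periodic orbits on $\mathcal{T}$ with itineraries preserved. Saying that Ghys ``carries this out via the space of unimodular lattices and an analysis of short vectors'' names the setting but not the argument: the actual proof is a deformation argument in which the flow is connected through a one-parameter family to one supported on the template, and one must track that periodic orbits of a fixed combinatorial type persist and that no crossing changes occur along the deformation --- none of which is reproduced or even sketched here. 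As written, your text is an accurate statement of what must be proved rather than a proof of it; to be complete you would either have to carry out Ghys's deformation and the verification that itineraries and crossing data are preserved, or do what the paper does and cite the result as a black box.
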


In particular, to prove Theorem \ref{thm:mainthm}, it suffices to prove it for Lorenz links.

\section{Proof of the main result}\label{sec:proof}

To compute $\link(k_A, k_B)$, by Theorem \ref{thm:ghys}, it suffices to compute the linking number of the corresponding knots on the Lorenz template. Let these knots be $k_1'$ and $k_2'$ respectively. The linking number is then the number of times $k_2'$ crosses under $k_1'$, and accounting for sign: $+1$ from right to left, and $-1$ from left to right.

Let $W_1=a_1a_2\cdots a_m$ and $W_2=b_1b_2\cdots b_n$ be the distinct Lorenz words corresponding to $A$ and $B$ respectively. For each $1\leq i\leq m$, let $K_i$ be the part of the $k_1'$ corresponding to $a_i$ on the template, and say that it leaves from $x_i\in (0, 1)$ (and therefore flows to $x_{i+1}$, with indices taken modulo $m$). Similarly, for each $1\leq j\leq n$, let $L_j$ be the part of $k_2'$ on the template corresponding to $b_j$, and say that it leaves from $y_j\in(0, 1)$. To compute $\link(W_1, W_2)$, it suffices to compute $\link(K_i, L_j)$ for each pair $(i,j)$ with $1\leq i\leq m$ and $1\leq j\leq n$ and add them up.

\textbf{Case 1:} $a_i=b_j$. In this case, both knots flow around the same side of the template, not intersecting each other, and return to the branch in the same order as they started. See Figure \ref{fig:sameside} for a depiction of the situation. This contributes nothing to the linking number.

\begin{figure}[htb]
	\includegraphics[scale=0.85]{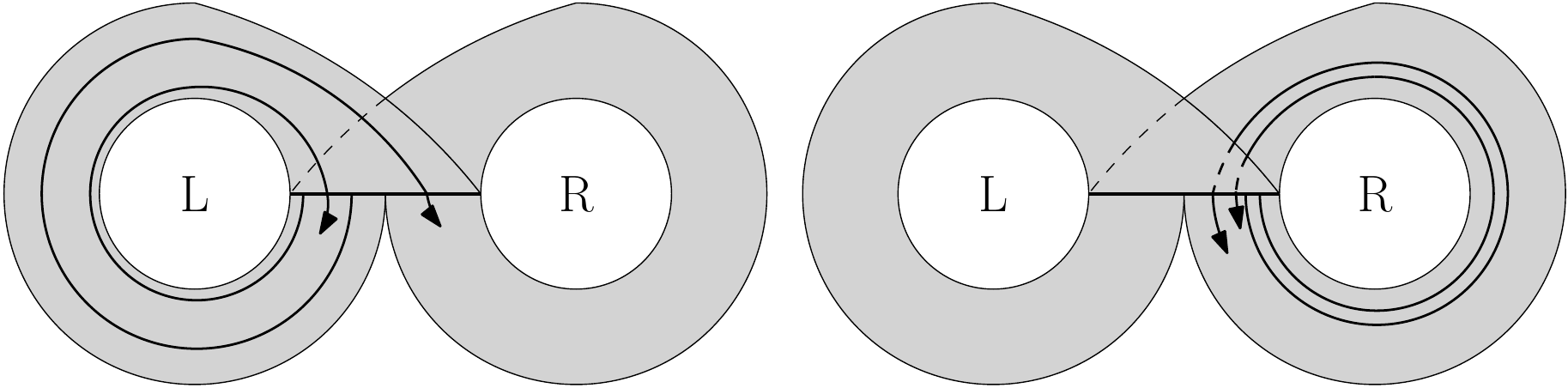}
	\caption{Flowing around the same side of the template.}\label{fig:sameside}
\end{figure}

\textbf{Case 2:} $a_i=L$ and $b_j=R$. The knots flow around opposite loops, and may or may not cross as they return. 

\begin{itemize}
	\item If they do not cross, then we must have $x_{i+1}<y_{i+1}$. Therefore we must have either $a_{i+1}=b_{j+1}$, or $a_{i+1}=L$ and $b_{j+1}=R$. In the first case, the loops will flow around the corresponding side, and again retain their ordering. Therefore we again have $a_{i+2}=b_{j+2}$ or $a_{i+2}=L$ and $b_{j+2}=R$. This process continues, until eventually the knots take opposite sides (after at most $mn$ iterations), which must be $L$ for $a_{i+k}$ and $R$ for $b_{j+k}$.
	
	\noindent To sum up this sub-case, if they do not cross, then there is some word $X$ such that the Lorenz words are $LXL$ and $RXR$ starting at $(a_i, b_j)$.
	
	\item If they do cross, then we have $y_{i+1}<x_{i+1}$. The sign of crossing is also $-1$, since the left branch comes in on top of the right branch. The analysis of the previous case holds, except now when the knots separate, we must have $a_{i+k}=R$ and $b_{i+k}=L$. In other words, there is some $X$ such that the Lorenz words are $LXR$ and $RXL$ starting at $(a_i, b_j$).
\end{itemize}

Since the conditions found in the two sub-cases are disjoint and cover every possibility, they are if and only if. In particular, occurrences of $LXR$ and $RXL$ contribute $-1$ to the linking.

\textbf{Case 3:} $a_i=R$ and $b_j=L$. This is identical to case 2, except now a crossing corresponds to $k_1'$ coming in underneath $k_2'$, which we do not count.

\noindent Theorem \ref{thm:mainthm} follows.

\section{Comparison to Kennedy's formula}\label{sec:kennedy}

In \cite{Kennedy94}, Kennedy provides another computation of $\link(W_1, W_2)$, where $W_1$ and $W_2$ are two Lorenz words (that are not equivalent under cyclic shift).

\begin{definition}
	Let $\sigma$ be a permutation of $\{1,2,\ldots,n\}$. Define the crossing count of $\sigma$ to be $\sum_{i=1}^n|\sigma(i)-i|$.
\end{definition}

For $i=1,2$, let $W_i$ have period $n_i$, and write out the following words in order (where $s$ represents a single shift):
\[W_1, s(W_1), s^2(W_1), \ldots, s^{n_1-1}(W_1), W_2, s(W_2), \ldots, s^{n_2-1}(W_2).\]
Let $\sigma_1$ denote the permutation that alphabetizes the first $n_1$ words, let $\sigma_2$ denote the permutation that alphabetizes the second $n_2$ words, and let $\sigma_3$ denote the permutation that alphabetizes all $n_1+n_2$ words.

\begin{theorem}[Theorem 1 of \cite{Kennedy94}]\label{thm:kennedy}
	The linking number of the Lorenz knots with words $W_1$ and $W_2$ is
	\[\dfrac{1}{4}\left(C(\sigma_1)+C(\sigma_2)-C(\sigma_3)\right).\]
\end{theorem}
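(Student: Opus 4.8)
The plan is to show that Kennedy's expression $\frac14(C(\sigma_1)+C(\sigma_2)-C(\sigma_3))$ equals the combinatorial count of triples $(i,j,x)$ from Theorem \ref{thm:mainthm} (which we have already identified with $-\link(k_A,k_B)$, up to the global sign convention noted in the second Remark of Section 1). First I would recall the precise rule for alphabetizing cyclic shifts of a Lorenz word: Birman–Williams order words in $\{L,R\}$ with $L<R$, and for two distinct aperiodic words the comparison is decided at the first index where they differ. So $C(\sigma_3)$ counts, for every unordered pair of shifts drawn from the combined list, how far apart their ranks are in the global sort, while $C(\sigma_1)$ and $C(\sigma_2)$ count the same thing restricted to pairs both coming from $W_1$ or both from $W_2$. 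The key reduction is the standard fact that $C(\sigma)=\sum_i|\sigma(i)-i|$ for any permutation equals $2$ times the number of "inversions-with-multiplicity" in the sense of pairs $(p,q)$ with $p<q$ but $\sigma^{-1}$ placing them in swapped relative order — more usefully, $C(\sigma)$ counts ordered pairs $(p,q)$ such that exactly one of $p,q$ lies strictly between the two positions that $\sigma$ sends them to; equivalently $C(\sigma) = 2\cdot\#\{(p,q): p<q,\ \sigma(p)>\sigma(q)\}$ is \emph{false} in general, so instead I would use the cleaner identity $C(\sigma)=\sum_{t}(\#\{i\le t: \sigma(i)>t\}+\#\{i>t:\sigma(i)\le t\})$, i.e. $C(\sigma)$ is twice the total number of "cut-crossings" summed over all $n-1$ gaps $t$. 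This gives $\tfrac14(C(\sigma_1)+C(\sigma_2)-C(\sigma_3))$ the interpretation: $\tfrac12$ times (gap-crossings within the two blocks) minus $\tfrac12$ times (gap-crossings in the merged list).

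Next I would set up the bijection. After merging and sorting all $n_1+n_2$ cyclic shifts, each gap between consecutive words in the global order that separates a $W_1$-shift from a $W_2$-shift is what survives the cancellation: if a gap has a $W_1$-shift and a $W_2$-shift adjacent across it, the within-block sorts do not "see" it the same way the merged sort does, and the arithmetic of $\tfrac14(C_1+C_2-C_3)$ isolates exactly these mixed adjacencies. Concretely, $C(\sigma_3)-C(\sigma_1)-C(\sigma_2)$ should reduce to $-2$ times the number of adjacent pairs $(U,V)$ in the global order with $U$ a shift of $W_1$ and $V$ a shift of $W_2$ (or vice versa) — this is the combinatorial heart, and it is the analogue of the elementary lemma that merging two sorted lists creates exactly one "descent boundary" per maximal run. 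I would then match each such mixed adjacency $(U,V)$ to a triple $(i,j,x)$: $U=s^{i-1}(W_1)$ and $V=s^{j-1}(W_2)$ are adjacent in the sort precisely when they agree on a maximal prefix of some length and then one goes $L$ while the other goes $R$; the position of that first disagreement, read back through the cyclic shift, recovers $x$, and the requirement that it is $W_1$ carrying the $L$ at the \emph{start} versus the $R$ at the end (the $LXR$ / $RXL$ pattern) is exactly the orientation constraint $a_i=L, b_j=R, a_{i+x}=R, b_{j+x}=L$. The two possible orders of a mixed adjacency ($W_1$-shift immediately below vs. immediately above a $W_2$-shift) correspond to the two halves of the $LXR$/$RXL$ versus $RXL$/$LXR$ dichotomy, and summing them gives the full count.

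The main obstacle I anticipate is the bookkeeping in the middle step: showing cleanly that $C(\sigma_1)+C(\sigma_2)-C(\sigma_3)$ collapses to (a constant times) the number of mixed adjacencies, with the right constant $4$. The subtlety is that $C$ is a weighted, not just boundary-counting, statistic — a word can move many ranks under sorting — so one must argue that every "long-range" contribution to $C(\sigma_3)$ is accounted for identically inside $C(\sigma_1)$ or $C(\sigma_2)$ and only the block-boundary terms fail to cancel. The cleanest route is the gap-sum formula for $C$ above: for each of the $n_1+n_2-1$ gaps in the merged order, the number of list elements that must cross it under $\sigma_3$ versus the corresponding crossings under $\sigma_1,\sigma_2$ differ by a quantity supported only on gaps at a $W_1/W_2$ interface, and then a direct count at such an interface yields the factor. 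A secondary, more delicate point is periodicity/aperiodicity: because $W_1,W_2$ are aperiodic and inequivalent, all $n_1+n_2$ shifts are genuinely distinct, so the sorts are well-defined with no ties — this is where the hypothesis that $A$ and $B$ are non-conjugate (hence $W_1\not\sim W_2$) is used, mirroring its use in Theorem \ref{thm:mainthm}. Once the interface count is pinned down, matching it to the triples is a routine unwinding of the definition of cyclic shift, and the equality of the two formulas for $\link(W_1,W_2)$ follows.
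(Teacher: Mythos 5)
A preliminary point: the paper itself offers no proof of this statement. It is quoted as Theorem 1 of Kennedy's 1994 paper purely for comparison, and the author explicitly remarks that even a direct combinatorial proof that Kennedy's count agrees with the count in Theorem \ref{thm:mainthm} ``does not appear to be obvious.'' Your strategy --- deriving Kennedy's formula from Theorem \ref{thm:mainthm} by a sorting/cancellation argument --- is therefore not the route the source takes (Kennedy works directly with the Lorenz braid: $\tfrac12 C(\cdot)$ of the relevant permutations counts braid crossings, inter-component crossings are total crossings minus the self-crossings of each component, and each contributes $\pm\tfrac12$ to the linking number), and, more importantly, your attempt contains a step that is simply false.

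The fatal gap is the ``combinatorial heart'': the claim that $C(\sigma_3)-C(\sigma_1)-C(\sigma_2)$ is supported only on gaps at a $W_1/W_2$ interface and reduces to a constant multiple of the number of mixed adjacencies in the merged alphabetical order. The paper's own worked example refutes this. From Table \ref{table:alphabet} the merged order is $LLRLRR,\ LR,\ LRLRRL,\ LRRLLR,\ RL,\ RLLRLR,\ RLRRLL,\ RRLLRL$, which has exactly $4$ mixed adjacencies, while $C(\sigma_3)-C(\sigma_1)-C(\sigma_2)=26-14-0=12$ and the linking number is $-3$; no choice of constant reconciles $4$ with $12$ and $3$ simultaneously across examples. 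Worse, the number of mixed adjacencies is bounded above by $2\min(n_1,n_2)$ (each shift of the shorter word has at most two neighbours in the sorted list), whereas for fixed $W_2=LR$ the quantity $-\link(k_A,k_B)$ is unbounded as $W_1$ varies, so no adjacency-supported cancellation can possibly hold. The underlying reason is that your gap-sum identity for $C(\sigma)$, while correct, does not localize: inserting the shifts of $W_2$ into the sorted shifts of $W_1$ displaces the ranks of words arbitrarily far from any interface, so the long-range contributions to $C(\sigma_3)$ are not reproduced inside $C(\sigma_1)+C(\sigma_2)$. If you want to salvage the approach, the object to aim for is a count over all mixed \emph{pairs} of shifts (one from each word) whose relative order is reversed by the single-shift map --- a global inversion count, not an adjacency count --- and then a bijection between such mixed inversions and the triples $(i,j,x)$ of Theorem \ref{thm:mainthm}; establishing that bijection is precisely the step the paper flags as non-obvious, and your proposal does not supply it.
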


For example, consider our example of $W_1=RRLLRL$ and $W_2=LR$, as examined in Figures \ref{fig:twoknots} and \ref{fig:topint}. The alphabetizations are in Table \ref{table:alphabet}.

\begin{table}[hbt]
	\centering
	\caption{Alphabetization of $RRLLRL$ and $LR$.}\label{table:alphabet}
	\begin{tabular}{|c|c|c|c|}
		\hline
		$n$ & Word     & Joint order & Individual order        \\ \hline
		1   & $RRLLRL$ & 8 & 6 \\ \hline
		2   & $RLLRLR$ & 6 & 4 \\ \hline
		3   & $LLRLRR$ & 1 & 1 \\ \hline
		4   & $LRLRRL$ & 3 & 2 \\ \hline
		5   & $RLRRLL$ & 7 & 5 \\ \hline
		6   & $LRRLLR$ & 4 & 3 \\ \hline\hline
		7   & $LR$     & 2 & 1 \\ \hline
		8   & $RL$     & 5 & 2 \\ \hline
	\end{tabular}
\end{table}
Therefore $\sigma_1=(6,4,1,2,5,3)$, $\sigma_2=(1,2)$, and $\sigma_3=(8,6,1,3,7,4,2,5)$. This gives
\begin{align*}
	C(\sigma_1)= |6-1|+|4-2|+|1-3|+|2-4|+|5-5|+|3-6|= & 14\\
	C(\sigma_2)= |1-1|+|2-2|= & 0\\
	C(\sigma_3)= |8-1|+|6-2|+|1-3|+|3-4|+|7-5|+|4-6|+|2-7|+|5-8|= & 26,
\end{align*}
hence
\[\link(W_1, W_2)=\dfrac{1}{4}\left(14+0-26\right)=-3,\]
which agrees with our computation. A direct combinatorial proof that the counts in Theorems \ref{thm:kennedy} and \ref{thm:mainthm} are equal does not appear to be obvious.

\section{Comparison to Duke-Imamo\={g}lu-T\'{o}th's symmetrized linking number}\label{sec:DIT}

In certain cases, $\link(k_A, k_B)$ can be deduced from $\link(k_A+k_{A^{-1}},k_B+k_{B^{-1}})$, which was computed in \cite{DIT17} and \cite{JR21}.

\begin{definition}
	A primitive hyperbolic matrix $A\in\PSL(2, \RR)$ is called \textit{reciprocal} if $A$ is conjugate to $A^{-1}$.
\end{definition}

The matrix $A$ is reciprocal if and only if the corresponding quadratic form $q_A$ is reciprocal, i.e. $q_A$ is $\PSL(2, \ZZ)$-equivalent to $-q_A$. Furthermore, if $W$ is the Lorenz word associated to $A$, then $A$ is reciprocal if and only if when you write $W$ backwards and swap $L$'s and $R$'s, you end up with a cyclic shift of $W$. For example, $A=\sm{3}{5}{7}{12}$ (from the example in Figure \ref{fig:twoknots}) is reciprocal, since it corresponds to the Lorenz word $W=RRLLRL$. Writing this word backwards and swapping $R$'s and $L$'s gives $RLRRLL$, which is a cyclic shift of $W$ by 4 places.

Using Theorem \ref{thm:mainthm} (see Proposition 3.8 of \cite{JR21} and the surrounding commentary for more details) we can prove the following result.

\begin{proposition}\label{prop:reciprocal}
	Let $A$ or $B$ be reciprocal. Then
	\[\link(k_A, k_B)=\dfrac{1}{4}\link(k_A+k_{A^{-1}},k_B+k_{B^{-1}}).\]
\end{proposition}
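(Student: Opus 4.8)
The plan is to reduce the symmetrized linking number to a sum of four $\IntRS$-type counts and then use the reciprocity of $A$ (or $B$) to show that all four counts are equal. By Theorem \ref{thm:topint}, for any primitive hyperbolic $C, D$ that are non-conjugate we have $-\link(k_C, k_D) = \IntRS(C, D)$. Expanding the left side of the claimed identity bilinearly, we get
\[
\link(k_A + k_{A^{-1}}, k_B + k_{B^{-1}}) = \link(k_A,k_B) + \link(k_A,k_{B^{-1}}) + \link(k_{A^{-1}},k_B) + \link(k_{A^{-1}},k_{B^{-1}}),
\]
so it suffices to show that each of the four terms on the right equals $\link(k_A,k_B)$ when $A$ (or $B$) is reciprocal. (One must check that all four pairs are non-conjugate; this follows from the hypothesis that $A,B$ are non-conjugate and not mutual inverses — the case where one of $A,B$ is itself reciprocal needs the cross-pairs like $(A,A)$ excluded, which is exactly why the bilinear expansion above still makes sense as four genuine linking numbers of distinct knots when $A \ne B^{\pm 1}$.)

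The heart of the argument is a symmetry property of $\IntRS$ under inversion. First I would record how inversion acts on the combinatorial data: if $W$ is a river word for $C$, then a river word for $C^{-1}$ is obtained by reversing $W$ and swapping $L \leftrightarrow R$ — this is the "backwards-and-swap" operation already mentioned in the excerpt, and it corresponds to traversing the river in the opposite direction together with the reflection exchanging the two sides of the river. Under this operation, an RS-intersection (river $R_B$ joins $R_A$ from the right, flowing in the same direction) transforms into one of the other three intersection types; tracking exactly which one, as the operation is applied to $A$, to $B$, or to both, identifies $\IntRS(A^{-1},B)$, $\IntRS(A, B^{-1})$, $\IntRS(A^{-1}, B^{-1})$ with the remaining three of the four "side $\times$ direction" intersection numbers. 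So the symmetrized linking number is exactly the \emph{total} topograph intersection number $\Int(\tl{A}, \tl{B})$ split into its four pieces — consistent with Theorem \ref{thm:dit} — and the four pieces are permuted among themselves by inverting $A$ or $B$.

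Now invoke reciprocity: if $A \cong A^{-1}$, then "backwards-and-swap" applied to $W_A$ returns a cyclic shift of $W_A$, i.e. $q_A$ and $-q_A$ are $\PSL(2,\ZZ)$-equivalent, so the reflection exchanging the two sides of $R_A$ is realized by an actual automorphism of the topograph $T_A$. Consequently the count of intersections where $R_B$ joins from the right equals the count where it joins from the left (the reflection is a bijection between the two), and likewise the reciprocal symmetry lets us pair up same-direction with opposite-direction configurations. Carrying this through, all four of the side $\times$ direction intersection numbers are equal, each to $\IntRS(A,B) = -\link(k_A,k_B)$, and hence their sum $\Int(\tl A, \tl B) = -\link(k_A+k_{A^{-1}}, k_B+k_{B^{-1}})$ equals $4\cdot(-\link(k_A,k_B))$, which is the proposition. (The argument is symmetric in $A$ and $B$, so reciprocity of $B$ suffices just as well.)

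The step I expect to be the main obstacle is the bookkeeping in the previous paragraph: making the reflection/reversal correspondence precise enough to see that reciprocity forces \emph{all four} counts to coincide rather than merely collapsing them into two equal pairs. One has to be careful that "joins from the right, same direction" and, say, "joins from the left, opposite direction" are genuinely interchanged by the side-reflection of a \emph{single} river — and that the reciprocal automorphism acts on the intersection configurations with no fixed points that would spoil the counting — so that a reciprocal $A$ alone (without assuming $B$ reciprocal) already identifies the right-join counts with the left-join counts, and simultaneously the same-direction with opposite-direction counts, for \emph{both} choices of which river is reflected. Once that symmetry is nailed down the factor of $\tfrac14$ is immediate; I would lean on Proposition 3.8 of \cite{JR21} and the surrounding discussion, as the excerpt suggests, to supply the precise form of this symmetry argument in the quadratic-forms language.
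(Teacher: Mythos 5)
Your skeleton (expand $\link(k_A+k_{A^{-1}},k_B+k_{B^{-1}})$ bilinearly into four terms, then use inversion symmetries to equate them) is the right one, and is what the paper intends --- it gives no written proof, deferring to Proposition 3.8 of \cite{JR21}. But the central identification you make is incorrect, and it leaves a genuine gap. The four quantities $\IntRS(A,B)$, $\IntRS(A^{-1},B)$, $\IntRS(A,B^{-1})$, $\IntRS(A^{-1},B^{-1})$ are \emph{not} the four distinct ``side $\times$ direction'' intersection counts. Replacing $B$ by $B^{-1}$ reverses the orientation of $R_B$, which swaps the side of entry \emph{and} the relative direction simultaneously; replacing $A$ by $A^{-1}$ does the same, since the side labels are defined relative to the orientation of $R_A$. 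Writing $(X,D)$ for the count of crossing configurations in which $R_B$ enters from side $X$ of $R_A$ with relative direction $D$, one finds $\IntRS(A,B)=\IntRS(A^{-1},B^{-1})=(R,S)$ and $\IntRS(A^{-1},B)=\IntRS(A,B^{-1})=(L,O)$: the four linking numbers realize only two of the four types, each twice, and the types $(R,O)$ and $(L,S)$ are not among them. So the symmetrized linking number is \emph{not} ``$\Int(\tl{A},\tl{B})$ split into its four pieces,'' and the reflection argument does not deliver what you need: the reciprocal symmetry of $T_A$ swaps the sides of $R_A$ and reverses its direction at the same time, hence only yields the pairings $(R,S)=(L,O)$ and $(R,O)=(L,S)$; it never produces the cross-equality $(R,S)=(R,O)$ on its own. ``Carrying this through, all four are equal'' is precisely the unjustified step.

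The repair is simpler than your route. First, $\link(k_A,k_B)=\link(k_{A^{-1}},k_{B^{-1}})$ holds for \emph{all} non-conjugate primitive hyperbolic $A,B$: reversing-and-swapping both words sends each matching pair ($LXR$ in $\loseq(W_A)$, $RXL$ in $\loseq(W_B)$) counted by Theorem \ref{thm:mainthm} bijectively to a matching pair for $(W_{A^{-1}},W_{B^{-1}})$. Second, if $A$ is reciprocal then $A$ and $A^{-1}$ are conjugate, so $k_{A^{-1}}$ \emph{is} the knot $k_A$; hence $\link(k_{A^{-1}},k_B)=\link(k_A,k_B)$ and $\link(k_A,k_{B^{-1}})=\link(k_{A^{-1}},k_{B^{-1}})=\link(k_A,k_B)$, the last equality by the first symmetry. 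All four terms in the bilinear expansion therefore equal $\link(k_A,k_B)$, giving the factor of $4$. (As you note, one must also assume $A$ is not conjugate to $B^{\pm 1}$ so that all four linking numbers are defined; the argument is symmetric in $A$ and $B$.)
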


In particular, if either $A$ or $B$ is reciprocal, then $\link(k_A, k_B)$ can be deduced from the symmetrized linking number. If both $A$ and $B$ are not reciprocal, then this is no longer true, and we obtain new information.

For example, consider the non-reciprocal matrices
\[A=\lm{3}{2}{1}{1},\quad B=\lm{14}{9}{3}{2},\quad C=\lm{19}{7}{8}{3}, \quad D=\lm{28}{9}{3}{1},\]
which correspond to the Lorenz words
\[W_A=LLR,\quad W_B=LLLLRLR,\quad W_C=LLRRLRR, \quad W_D=LLLLLLLLLRRR.\]

Table \ref{table:reciprocal} gives the corresponding linking numbers, and it can be observed that Proposition \ref{prop:reciprocal} does not necessarily hold when neither matrix is reciprocal.

\begin{table}[hbt]
	\centering
	\caption{Linking number comparison.}\label{table:reciprocal}
	\begin{tabular}{|c|c|c|c|}
		\hline
		$X,Y$ & $\link(k_X, k_Y)$ & $\link(k_X, k_{Y^{-1}})$ & $\link(k_X+k_{X^{-1}}, k_Y+k_{Y^{-1}})$ \\ \hline
		$A,B$ & 4 & 2 & 12 \\ \hline
		$A,C$ & 3 & 4 & 14 \\ \hline
		$A,D$ & 3 & 3 & 12 \\ \hline
		$B,C$ & 6 & 8 & 28 \\ \hline
		$B,D$ & 7 & 5 & 24 \\ \hline
		$C,D$ & 7 & 7 & 28 \\ \hline
	\end{tabular}
\end{table}

\bibliographystyle{alpha}
\bibliography{../references}
\end{document}